\newtheorem{theorem}{Theorem}[section]
\theoremstyle{definition}  
\newtheorem{Question}[theorem]{Question}
\theoremstyle{remark}
\numberwithin{equation}{section}
\begin{document}

\title{Intrinsic characterizations of $C$-realcompact spaces}

\author{ Sudip Kumar Acharyya}
\address{Department of Pure Mathematics, University of Calcutta, 35, Ballygunge Circular Road, Kolkata - 700019, INDIA} 
\email{sdpacharyya@gmail.com}

\author{Rakesh Bharati}
\address{Department of Pure Mathematics, University of Calcutta, 35, Ballygunge Circular Road, Kolkata - 700019, INDIA} 
\email{bharti.rakesh292@gmail.com}

\thanks{The second author acknowledges financial support from University Grand Commission, New Delhi, for the award of research fellowship (F. No. 16-9(June 2018)/2019 (NET/CSIR))}

\author{A. Deb Ray }
\address{Department of Pure Mathematics, University of Calcutta, 35, Ballygunge Circular Road, Kolkata - 700019, INDIA} 
\email{debrayatasi@gmail.com}

\begin{abstract}
$c$-realcompact spaces are introduced by Karamzadeh and Keshtkar in Quaest. Math. 41(8), 2018, 1135-1167. We offer a characterization of these spaces $X$ via $c$-stable family of closed sets in $X$ by showing that  $X$ is $c$-realcompact if and only if each $c$-stable family of closed sets in $X$ with finite intersection property has nonempty intersection. This last condition which makes sense for an arbitrary topological space can be taken as an alternative definition of a $c$-realcompact space. We show that each topological space can be extended as a dense subspace to a $c$-realcompact space with some desired extension property. An allied class of spaces viz $CP$-compact spaces akin to that of $c$-realcompact spaces are introduced. The paper ends after examining how far a known class of $c$-realcompact spaces could be realized as $CP$-compact for appropriately chosen ideal $P$ of closed sets in $X$.
\end{abstract}

\keywords{$c$-realcompact spaces, Banaschewski compactification, $c$-stable family of closed sets, ideals of closed sets, initially $\theta$-compact spaces.}
\subjclass[2010]{54C40}
\maketitle

\section{Introduction}
\noindent In what follows $X$ stands for a completely regular Hausdorff topological space. As usual $C(X)$ and $C^*(X)$ denote respectively the ring of all real valued continuous functions on $X$ and that of all bounded real valued continuous functions on $X$. Suppose $C_c(X)$ is the subring of $C(X)$ containing those functions $f$ for which $f(X)$ is a countable set and $C_c^*(X)= C_c(X)\cap C^*(X)$. Formal investigations of these two rings vis-a-vis the topological structure of $X$ are being carried on only in the recent times. It turns out that there is an interplay between the topological structure of $X$ and the ring and lattice structure of $C_c(X)$ and $C_c^*(X)$, which incidentally sheds much light on the topology of $X$. The articles \cite{ref3}, \cite{ref4}, \cite{ref7}, \cite{ref8}, \cite{ref11} may be referred in this context. The notion of $c$-realcompact spaces is the fruit of one such endeavours in the study of $X$ versus $C_c(X)$ or $C_c^*(X)$. A space $X$ is declared $c$-realcompact in \cite{ref8} if each real maximal ideal $M$ in $C_c(X)$ is fixed in the sense that there exits a point $x\in X$ such that for each $f \in M$, $f(x)=0$. $M$ is called real when the residue class field $C_c(X)/M$ is isomorphic to the field $\mathbb{R}$. A number of interesting facts concerning these spaces is discovered in \cite{ref8}. These may be called countable analogues of the corresponding properties of real compact spaces as developed in \cite{ref6}, chapter 8. In the present article we offer a new characterization of $c$-realcompact spaces on using the notion, $c$-stable family of closed sets in $X$. A family $\mathcal{F}$ of subsets of $X$ is called $c$-stable if given $f\in C(X,\mathbb{Z})$, there exists $F\in \mathcal{F}$ such that $f$ is bounded on $F$.

We define a topological space $X$ (not necessary completely regular ) to be $c_c$-realcompact if each $c$-stable family of closed sets in $X$ with finite intersection property has nonempty intersection. We check that this new notion of $c_c$-realcompactness agrees with the already introduced notion of $c$-realcompactness in \cite{ref8}, within the class of zero-dimensional Hausdorff spaces (Theorem \ref{2.3}). We re-establish a modified version of a few known properties of $c$-realcompact spaces using our new definition $c_c$-raelcompactness (Theorem \ref{2.4}). Furthermore we realize that any topological space $X$ can be extended as a dense subspace to a $c_c$-realcompact space $\upsilon_0X$ enjoying some desired extension properties (Theorem \ref{2.5}). While constructing this extension of $X$, we follow closely the technique adopted in \cite{ref9}. The results mentioned above constitute the first technical section viz $\S2$ of this article.

A family $\mathcal{P}$ of closed sets in $X$ is called an ideal of closed sets if $A\in \mathcal{ P}$, $B\in \mathcal{ P}$ and $C$ is a closed subset of $A$ imply that $A\cup B\in \mathcal{ P}$ and $C\in \mathcal{ P}$. Let $\Omega(X)$ stand for the aggregate of all ideals of closed sets in $X$. For any $\mathcal{ P}\in \Omega(X)$ let $C_\mathcal{ P}(X)=\{f \in C(X):cl_X(X \setminus Z(f))\in \mathcal{P}\}$, here $Z(f)=\{x\in X:f(x)=0\}$ is the zero set of $f$ in $X$. It is well known that $C_\mathcal{ P}(X)$ is an ideal in the ring $C(X)$, see \cite{ref1} and \cite{ref2} for more information on these ideals. With referance to any such $\mathcal{ P}\in \Omega(X) $, we call a family $\mathcal{F}$ of subsets of $X$ $c_\mathcal{ P}$-stable if given $f\in C(X,\mathbb{Z})\cap C_\mathcal{ P}(X)$ there exists $F\in \mathcal{F}$ such that $f$ is bounded on $F$. We define a space $X$ to be $c_\mathcal{ P}$-compact if any $c_\mathcal{ P}$-stable family of closed sets in $X$ with finite intersection property has non-empty intersection. It is clear that a zero-dimensional space $X$ is $c_c$-realcompact if it is already $c_\mathcal{ P}$-compact.

We have shown that if $X$ is a noncompact zero-dimensional space and $\mathcal{ P}\in \Omega(X)$ such that $X$ is $c_\mathcal{ P}$-compact, then there exists an $\mathcal{R}\in \Omega(X)$ such that $\mathcal{R} \varsubsetneqq \mathcal{ P}$ and $X$ is $c_\mathcal{R}$-compact. Thus within the class of zero-dimensional noncompact spaces $X$, there is no minimal member $\mathcal{ P}\in \Omega(X)$ in the set inclusion sense of the term for which $X$ becomes $c_\mathcal{ P}$-compact (Theorem \ref{t-3.3}). In the concluding portion of $\S3$ of this article we have examined, how far the known classes of $c$-realcompact spaces could be achieved as $c_\mathcal{ P}$-compact spaces for appropriately chosen $\mathcal{ P}\in \Omega(X)$. For any infinite cardinal number $\theta$, $X$ is called finally $\theta$-compact if each open cover of $X$ has a subcover with cardinality $<\theta $ (see \cite{ref10}). In this terminology finally $\omega_1$-compact spaces are Lindel\"of and finally $\omega_0$-compact spaces are compact. It is realized that a $c$-realcompact space $X$ is finally $\theta$-compact if and only if it is $c_\mathcal{Q}$-compact, where $\mathcal{Q}$ is the ideal of all closed finally $\theta$-compact subsets of $X$ (Theorem \ref{t-3.5}). A special case of this result reads: $X$ is Lindel{\"o}f when and only when $X$ is $c_\alpha $-compact where $\alpha$ is the ideal of all closed Lindel{\"o}f subsets of $X$.

\section{Properties of $c_c$-realcompact spaces and $c_c$-realcompactifications}

Before stating the first technical result of this section, we need to recall a few terminologies and results from \cite{ref4} and \cite{ref8}. Our intention is to make the present article self contained as far as possible. An element $\alpha$ on a totally ordered field $F$ is called infinitely large if $\alpha>n$ for each $n\in \mathbb{N}$. It is clear that $F$ is archimedean if and only if it does not contain any infinitely large element. If $M$ is maximal ideal in $C_c(X)$ then the residue class field $C_c(X)/M$ is totally ordered according to the following definition: for $f\in C(X)$ $M(f)\geqq 0$ if and only if the exists $g\in M$ such that $f\geqq 0$ on $Z(g)$. Here $M(f)$ stands for the residue class in $C(X)/M$, which contains the function $f$.

\begin{theorem}\label{t-2.1}
	\textnormal{(}Proposition 2.3 in \cite{ref8}\textnormal{)} For a maximal ideal $M$ in $C_c(X)$ and for $f\in C_c(X)$, $|M(f)|$ is infinitely large in $C_c(X)/M$ if and only if $f$ is unbounded on every zero set of $Z_c(M)=\{Z(g):g\in M\}$.
\end{theorem}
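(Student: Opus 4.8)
The plan is to recast the statement in terms of $M(|f|)$ and then prove the two implications, the second of which carries the real content. As a preliminary step I would note that $|f|\in C_c(X)$, that $M(|f|)\geq 0$ (since $|f|\geq 0$ on all of $X$, hence on $Z(g)$ for every $g\in M$), and that $M(|f|)^2=M(|f|^2)=M(f^2)=M(f)^2$; therefore $M(|f|)=|M(f)|$ in the totally ordered field $C_c(X)/M$, and the theorem becomes: $M(|f|)>M(n)$ for every $n\in\mathbb{N}$ if and only if $f$ is unbounded on each $Z(g)$, $g\in M$. I would also record the elementary observation that every $g\in M$ has $Z(g)\neq\emptyset$: a nowhere-vanishing element of $C_c(X)$ is a unit there (its reciprocal is continuous and has countable range), so the proper ideal $M$ contains none.

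For the ``only if'' direction I would argue by contraposition. If $|f|\leq n$ on some $Z(g)$ with $g\in M$ and $n\in\mathbb{N}$, then $n-|f|\geq 0$ on $Z(g)$, so by the definition of the order on $C_c(X)/M$ we get $M(n-|f|)\geq 0$, i.e. $|M(f)|=M(|f|)\leq M(n)$, so $|M(f)|$ is not infinitely large.

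The converse is where the work lies, and the obstacle is to produce, for a given $n\in\mathbb{N}$, an element of $M$ on whose zero set $|f|-n\geq 0$ holds. Assuming $f$ is unbounded on every zero set of $Z_c(M)$ and fixing $n$, I would take $h:=(n-|f|)\vee 0\in C_c(X)$, so that $Z(h)=\{x:|f(x)|\geq n\}$ and the cozero set of $h$ equals $\{x:|f(x)|<n\}$. The key claim is that $h\in M$: if not, maximality of $M$ gives $m\in M$ and $r\in C_c(X)$ with $m+rh=1$, whence $h$ vanishes nowhere on the nonempty set $Z(m)$, i.e. $Z(m)\subseteq\{|f|<n\}$, so $f$ is bounded on $Z(m)$, contradicting the hypothesis. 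With $h\in M$ in hand, $|f|-n\geq 0$ on $Z(h)$ yields $M(|f|-n)\geq 0$, i.e. $M(|f|)\geq M(n)$; moreover equality $M(|f|)=M(n)$ would force $|f|-n\in M$ and hence $|f|\equiv n$ on the nonempty zero set $Z(|f|-n)\in Z_c(M)$, again contradicting unboundedness. Thus $M(|f|)>M(n)$ for every $n$, so $|M(f)|$ is infinitely large. I expect the claim $h\in M$ --- the use of maximality together with the fact that nonvanishing functions in $C_c(X)$ are invertible --- to be the crux; everything else is routine bookkeeping with the order definition.
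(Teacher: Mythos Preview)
The paper does not supply its own proof of this statement: Theorem~\ref{t-2.1} is merely quoted from \cite{ref8} (Proposition~2.3 there) as background, so there is nothing in the present article to compare your argument against line by line.

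That said, your proposal is correct and is exactly the expected adaptation of the classical argument for $C(X)$ (compare Gillman--Jerison, 5.4(b) and 5.7) to the subring $C_c(X)$. The reduction to $M(|f|)$ via $M(|f|)^2=M(f)^2$ is standard, the contrapositive for the forward direction is immediate from the order definition, and your key step---showing $h=(n-|f|)\vee 0\in M$ by using maximality to write $1=m+rh$ and then reading off $Z(m)\subseteq\{|f|<n\}$---is precisely the right idea and goes through because $C_c(X)$ is closed under $|\cdot|$, $\vee$, and inversion of units. The final exclusion of $M(|f|)=M(n)$ is also handled cleanly. Nothing is missing.
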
 It is proved in \cite{ref4}, Remark 3.6 that if $X$ is a zero-dimensional space, then the set of all maximal ideals of $C_c(X)$ equipped with hull-kernel topology, also called the structure space of $C_c(X)$ is homeomorphic to the Banaschewski compactification $\beta_0X$ of $X$. Thus the maximal ideals of $C_c(X)$ can be indexed by virtue of the points of $\beta_0X$. Indeed a complete description of all these maximal ideals  is given by the list $\{M_c^p:p\in \beta_0X\}$, where $M_c^p=\{f\in C_c(X):p\in cl_{\beta_0X}Z(f)\}$ with $M_c^p$ is a fixed maximal ideal if and only if $ p\in X$ (see Theorem 4.2 in \cite{ref4}). It is well known that any continuous map $f:X\rightarrow Y$, where $X$ and $Y$ are both zero-dimensional spaces with $Y$ compact also, has an extension to a continuous map $\bar{f}:\beta_0X\rightarrow Y$ (we call this property, the $C$-extension property of $\beta_0X$) (see Remark 3.6 in \cite{ref4}). It follows that for a zero-dimensional space $X$, any continuous map $f:X\rightarrow \mathbb{Z}$ (also written as $f\in C(X,\mathbb{Z}))$, has an extension to a continuous map $f^*:\beta_0X\rightarrow\mathbb{Z}^*=\mathbb{Z}\cup \{\omega\}$, the one point compactification of $\mathbb{Z}$. We also write $f^*\in C(\beta_0X,\mathbb{Z}^*)$. A slightly variant form of the next result is proved in \cite{ref8}, Theorem 2.17 and Theorem 2.18.
\begin{theorem}\label{2.2}
	Let $X$ be zero-dimensional and $p\in \beta_0X$, then the maximal ideal $M_c^p$ in $C_c(X)$ is real if and only if for each $f\in C(X,\mathbb{Z})$, $f^*(p)\neq\omega$ if and only if $|M_c^P(f)|$ is not infinitely large in $C_c(X)/M_c^p$.
\end{theorem}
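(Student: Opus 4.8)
The plan is to route all three equivalences through Theorem \ref{t-2.1}. First I would record the standard field-theoretic reduction of the first condition: since every constant function lies in $C_c(X)$, the totally ordered field $C_c(X)/M_c^p$ contains a copy of $\mathbb{R}$, so it is isomorphic to $\mathbb{R}$ precisely when it is archimedean, i.e.\ precisely when it contains no infinitely large element; and as the quotient map is onto and $|M_c^p(f)|=M_c^p(|f|)$, this amounts to saying that $|M_c^p(f)|$ is not infinitely large for every $f\in C_c(X)$. Thus the statement ``$M_c^p$ is real'' unwinds to ``$|M_c^p(f)|$ is not infinitely large for all $f\in C_c(X)$'', and it remains to show (i) that this test suffices when run only over the subclass $C(X,\mathbb{Z})$, and (ii) that for $f\in C(X,\mathbb{Z})$ the condition ``$|M_c^p(f)|$ not infinitely large'' is equivalent to ``$f^*(p)\ne\omega$''. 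Part (ii) is exactly the second biconditional of the theorem.

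For (ii), fix $f\in C(X,\mathbb{Z})$; by Theorem \ref{t-2.1} it is enough to show that $f^*(p)\ne\omega$ iff $f$ is bounded on $Z(g)$ for some $g\in M_c^p$. Here I would use that finite subsets of $\mathbb{Z}$ and singletons $\{m\}\subseteq\mathbb{Z}$ are clopen in $\mathbb{Z}^*$, so their preimages under $f^*$ are clopen in $\beta_0X$, and that $cl_{\beta_0X}(V\cap X)=V$ for any clopen $V\subseteq\beta_0X$ (openness of $V$ plus density of $X$, then closedness of $V$). If $f^*(p)=m\in\mathbb{Z}$, put $U=(f^*)^{-1}(\{m\})\cap X$; then $U$ is clopen in $X$ with $p\in cl_{\beta_0X}U$, so the characteristic function $g$ of $X\setminus U$ lies in $C(X,\mathbb{Z})$, satisfies $p\in cl_{\beta_0X}Z(g)$ (hence $g\in M_c^p$), and $f\equiv m$ on $Z(g)=U$. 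Conversely, if $|f|\le n$ on some $Z(g)$ with $g\in M_c^p$, then $(f^*)^{-1}(\{-n,\dots,n\})$ is clopen in $\beta_0X$, contains $Z(g)$, hence contains $cl_{\beta_0X}Z(g)\ni p$, forcing $f^*(p)\in\{-n,\dots,n\}$, in particular $f^*(p)\ne\omega$.

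Granting (ii), the implication ``$M_c^p$ real $\Rightarrow$ $f^*(p)\ne\omega$ for all $f\in C(X,\mathbb{Z})$'' is immediate from the reduction. The reverse direction is the substantive part, and the one step I expect to be the real obstacle: promoting ``$|M_c^p(f)|$ not infinitely large'' from integer-valued $f$ to arbitrary $f\in C_c(X)$. Suppose some $h\in C_c(X)$ has $|M_c^p(h)|$ infinitely large; replacing $h$ by $|h|+1$ I may assume $h\ge 1$, and Theorem \ref{t-2.1} makes $h$ unbounded on every $Z(g)$ with $g\in M_c^p$. The device is to round $h$ to an integer-valued function without losing this property, using that $h(X)$ is countable: pick $t_n\in(n,n+1)\setminus h(X)$ for each $n\ge 1$, so each $B_n:=h^{-1}\big((t_n,\infty)\big)$ is clopen in $X$; the $B_n$ decrease with $\bigcap_n B_n=\emptyset$. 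Define $f\colon X\to\mathbb{Z}$ by $f\equiv n$ on $B_n\setminus B_{n+1}$ and $f\equiv 0$ off $B_1$; every level set is clopen, so $f\in C(X,\mathbb{Z})$. Since $h$ is unbounded on $Z(g)$, for each $n$ some point of $Z(g)$ lies in $B_n$ and there $f\ge n$; hence $f$ is unbounded on every $Z(g)$ with $g\in M_c^p$, so $|M_c^p(f)|$ is infinitely large by Theorem \ref{t-2.1} --- contradicting the hypothesis applied to integer-valued functions. This yields ``$M_c^p$ real'', and together with (ii) all three conditions are equivalent.
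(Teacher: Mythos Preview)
Your argument is correct. The reduction of ``real'' to ``archimedean'' via the embedded copy of $\mathbb{R}$ is standard; your proof of the pointwise equivalence $f^*(p)\ne\omega\Leftrightarrow |M_c^p(f)|$ not infinitely large (for $f\in C(X,\mathbb{Z})$) via clopen pullbacks and Theorem~\ref{t-2.1} is clean; and the rounding construction in the last paragraph---choosing $t_n\in(n,n+1)\setminus h(X)$ so that the sets $B_n=h^{-1}((t_n,\infty))$ are clopen and nested with empty intersection, then defining $f$ to be constant on the clopen pieces $B_n\setminus B_{n+1}$---does produce a locally constant (hence continuous) integer-valued function that inherits unboundedness on every $Z(g)$ with $g\in M_c^p$. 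One small remark: the sentence ``every level set is clopen, so $f\in C(X,\mathbb{Z})$'' is slightly glib, since having clopen fibers alone need not force continuity; what you actually use is that the clopen fibers \emph{partition} $X$ (because $\bigcap_n B_n=\emptyset$), which makes $f$ locally constant. That is implicit in your setup, but worth making explicit.

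As for comparison: the paper does not supply its own proof of this theorem. It simply records the statement and attributes it (in a slightly variant form) to Theorems~2.17 and~2.18 of \cite{ref8}. So there is nothing in the present paper to compare against; your write-up is a self-contained argument where the paper defers to the literature, and the key device you introduce---exploiting countability of $h(X)$ to manufacture an integer-valued witness---is exactly the sort of step one expects to find in \cite{ref8}.
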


\begin{theorem}\label{2.3}
	A zero-dimensional space $X$ is $c_c$-realcompact if and only if it is $c$-realcompact.
\end{theorem}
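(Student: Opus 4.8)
The plan is to show that for a zero-dimensional Hausdorff space $X$, the condition ``every $c$-stable family of closed sets with the finite intersection property has nonempty intersection'' is equivalent to ``every real maximal ideal of $C_c(X)$ is fixed''. Since Theorem \ref{2.2} identifies the real maximal ideals as exactly those $M_c^p$ with $p \in \beta_0 X$ such that $f^*(p) \neq \omega$ for every $f \in C(X,\mathbb{Z})$, I would work throughout with the indexing of maximal ideals by points of $\beta_0 X$ and translate everything into statements about $Z_c$-filters and their traces near a point of $\beta_0 X$.

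First I would prove the forward implication in its contrapositive form: suppose $X$ is not $c$-realcompact, so there is $p \in \beta_0 X \setminus X$ with $M_c^p$ real. Consider the family $\mathcal{F} = \{\, cl_X Z : Z \in Z_c(M_c^p)\,\}$ (equivalently the closures in $X$ of the zero sets $Z(g)$ for $g \in M_c^p$). This family has the finite intersection property because $Z_c(M_c^p)$ is a filter base, and $\bigcap \mathcal{F} = \emptyset$ since $p \notin X$ and the zero sets $Z(g)$, $g \in M_c^p$, cluster only at $p$ in $\beta_0 X$. The key point is that $\mathcal{F}$ is $c$-stable: given $f \in C(X,\mathbb{Z})$, realness of $M_c^p$ together with Theorem \ref{2.2} gives $f^*(p) \neq \omega$, so $|M_c^p(f)|$ is not infinitely large in $C_c(X)/M_c^p$; by Theorem \ref{t-2.1} (applied to $f$, noting $f \in C_c(X)$ since $f(X) \subseteq \mathbb{Z}$ is countable) this means $f$ is bounded on some zero set of $Z_c(M_c^p)$, hence bounded on some member of $\mathcal{F}$. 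Thus $\mathcal{F}$ witnesses the failure of the $c_c$-realcompactness condition.

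For the converse, again by contraposition, assume there is a $c$-stable family $\mathcal{F}$ of closed sets in $X$ with the finite intersection property but empty intersection. The zero-dimensional compactification $\beta_0 X$ is compact, so $\bigcap_{F \in \mathcal{F}} cl_{\beta_0 X} F \neq \emptyset$; pick a point $p$ in this intersection, noting $p \notin X$ since $\bigcap \mathcal{F} = \emptyset$ and the $F$ are closed in $X$. I claim $M_c^p$ is real, which by Theorem \ref{2.2} amounts to showing $f^*(p) \neq \omega$ for every $f \in C(X,\mathbb{Z})$. Fix such an $f$; by $c$-stability there is $F \in \mathcal{F}$ on which $f$ is bounded, say $|f| \leq n$ on $F$. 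Then $f^*$ maps $cl_{\beta_0 X} F$ into $cl_{\mathbb{Z}^*}\{-n,\dots,n\} = \{-n,\dots,n\}$, which omits $\omega$; since $p \in cl_{\beta_0 X} F$ we get $f^*(p) \neq \omega$. Hence $M_c^p$ is real and fixed nowhere (as $p \notin X$), so $X$ is not $c$-realcompact.

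The main obstacle is the bookkeeping in the forward direction: one must be careful that ``$f$ unbounded on every zero set of $Z_c(M_c^p)$'' (the hypothesis-side statement in Theorem \ref{t-2.1}) transfers correctly to the closures taken in $X$ — i.e. that boundedness of $f$ on some $Z(g)$ is the same as boundedness on $cl_X Z(g)$, which follows from continuity of $f$ and the fact that $\{x : |f(x)| \leq n\}$ is closed. One should also verify that $c$-stability for families of subsets of $X$ is unaffected by replacing each set by its closure, so that restricting attention to closed sets in the definition of $c_c$-realcompactness loses nothing; this is again immediate since $f$ continuous and bounded on $F$ is bounded on $cl_X F$. With these routine closure arguments in place, Theorems \ref{t-2.1} and \ref{2.2} do essentially all the work.
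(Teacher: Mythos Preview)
Your proof is correct and follows essentially the same route as the paper: both directions are argued by contraposition via a cluster point $p\in\beta_0X\setminus X$ of the given family, with Theorems~\ref{t-2.1} and~\ref{2.2} doing the work of translating between ``$M_c^p$ real'' and ``every $f\in C(X,\mathbb{Z})$ is bounded on some member of the family''. Two minor simplifications: zero sets $Z(g)$ are already closed in $X$, so the closures you take (and the final paragraph about transferring boundedness to closures) are unnecessary; and the reason $\bigcap\mathcal{F}=\emptyset$ in your forward direction is simply that $M_c^p$ is free, not that the zero sets cluster only at $p$.
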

\begin{proof}
	Let $X$ be a $c$-realcompact space and $\mathcal{F}$ be a family of closed subsets of $X$ with finite intersection property but with $\bigcap$ $\mathcal{F}$=$\emptyset$. To show that $X$ is $c_c$-realcompact we shall prove that $\mathcal{F}$ is not a $c$-stable family. Indeed $\{cl_{\beta_0X} F:F\in \mathcal{F}\}$ is a family of closed subsets of $\beta_0X$ with finite intersection property. Since $\beta_0X$ is compact, there exists a point $p\in \bigcap\limits_{F\in \mathcal{F}} {cl_{\beta_0X} F}$ and of course $p\in \beta_0X\setminus X$. Here $M_c^p$ is a free maximal ideal in $C_c(X)$. Since $X$ is $c$-realcompact this implies that $M_c^p$ is a hyperreal maximal ideal (meaning that it is not a real maximal ideal of $C_c(X)$). It follows from Theorem \ref{2.2} that there exists $f\in C(X,\mathbb{Z})$ with $f^*(p)=\omega$. Since $p\in cl_{\beta_0X} F$ for each $F\in \mathcal{F}$, it is therefore clear that `$f$' is unbounded on each set in the family $\mathcal{F}$. Therefore $\mathcal{F}$ is not a $c$-stable family.
	
	Conversely let $X$ be not $c$-realcompact. Then there exists a real maximal ideal $M$ in $C_c(X)$, which is not fixed. This means that there is a point $p\in \beta_0X\setminus X$ for which $M=M_c^p$. Since $p\in cl_{\beta_0X}Z(f)$ for each $f\in M_c^p$, it follows that $\{Z(f):f\in M_c^p\}$ is a family of closed sets in $X$ with finite intersection property but with empty intersection. To show that $X$ is not $c_c$-realcompact, it suffices to show that $\{Z(f):f\in M_c^p\}$ is a $c$-stable family. So let $g\in C(X,\mathbb{Z})$. Since $M_c^p$ is real, this implies in view of Theorem \ref{2.2} that $g^*(p)\neq\omega$ and hence $|M_c^p(g)|$ is not infinitely large. It follows therefore from Theorem \ref{t-2.1} that $g$ is bounded on some $Z(f)$ for an $f\in M_c^p$. This settles that $\{Z(f):f\in M_c^p\}$ is a $c$-stable family.
\end{proof}
	
	By adapting the arguments of Theorem 5.2, Theorem 5.3 and Theorem 5.4 in \cite{ref9}, appropriately we can establish the following facts about $c_c$-realcompact spaces without difficulty:
\begin{theorem} \label{2.4}

	\begin{enumerate}

	\item A compact space is $c_c$-realcompact.
	\item A pseudocompact $c_c$-realcompact space is compact.
	\item A closed subspace of a $c_c$-realcompact space is $c_c$-realcompact.
	\item The product of any set of $c_c$-realcompact spaces is $c_c$-realcompact.
	\item If a topological space $X=E\cup F$ where $E$ is a compact subset of X and $F$ is a $\mathbb{Z}$-embedded $c_c$-realcompact subset of $X$, meaning that each function in $C(F, \mathbb{Z})$ can be extended to a function in  $C(X,\mathbb{Z})$, then $X$ is $c_c$-realcompact.
	\item A $\mathbb{Z}$-embedded $c_c$-realcompact subset of a Hausdorff space $X$ is a closed subset of $X$.
	\end{enumerate}
\end{theorem}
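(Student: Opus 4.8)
The plan is to prove the six assertions of Theorem~\ref{2.4} by adapting, in order, the Hewitt--realcompactness style arguments of \cite{ref9}, with the real zero-sets replaced by the integer-valued continuous functions governing $c$-stability. Throughout I will use the characterization itself as the working definition: $X$ is $c_c$-realcompact iff every $c$-stable family of closed sets with the finite intersection property has nonempty intersection, where $\mathcal{F}$ is $c$-stable iff each $f\in C(X,\mathbb{Z})$ is bounded on some member of $\mathcal{F}$.

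For (1), if $X$ is compact then \emph{every} family of closed sets with the finite intersection property already has nonempty intersection, so in particular the $c$-stable ones do. For (2), suppose $X$ is pseudocompact and $c_c$-realcompact; given any free ultrafilter $\mathcal{F}$ of closed sets (equivalently, pick a point $p$ in a suitable compactification off $X$), one checks that pseudocompactness forces every $f\in C(X,\mathbb{Z})\subseteq C(X)$ to be bounded on $X$, hence bounded on each $F\in\mathcal{F}$, so $\mathcal{F}$ is $c$-stable; $c_c$-realcompactness then gives $\bigcap\mathcal{F}\neq\emptyset$, a contradiction, so no such free family exists and $X$ is compact. For (3), let $Y$ be closed in a $c_c$-realcompact $X$ and let $\mathcal{F}$ be a $c$-stable family of closed sets of $Y$ with the finite intersection property; since $Y$ is closed, the members of $\mathcal{F}$ are closed in $X$, and one must check that $\mathcal{F}$ remains $c$-stable \emph{as a family in $X$} --- here the point is that a function in $C(X,\mathbb{Z})$ restricts to a function in $C(Y,\mathbb{Z})$, so some $F\in\mathcal{F}$ absorbs the restriction and hence the original function is bounded on that $F$; then $\bigcap\mathcal{F}\neq\emptyset$ in $X$, and the intersection lies in $Y$.

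For (4), let $X=\prod_{i\in I}X_i$ with each $X_i$ being $c_c$-realcompact, and let $\mathcal{F}$ be a $c$-stable family of closed sets in $X$ with the finite intersection property. For each coordinate $i$, push $\mathcal{F}$ forward by $\pi_i$ and take closures to get a family $\mathcal{F}_i=\{cl_{X_i}\pi_i(F):F\in\mathcal{F}\}$ of closed sets in $X_i$ with the finite intersection property; the essential verification is that $\mathcal{F}_i$ is $c$-stable in $X_i$, which follows because any $g\in C(X_i,\mathbb{Z})$ yields $g\circ\pi_i\in C(X,\mathbb{Z})$, which is bounded on some $F\in\mathcal{F}$, hence $g$ is bounded on $\pi_i(F)$ and therefore on its closure. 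By $c_c$-realcompactness of $X_i$ choose $x_i\in\bigcap\mathcal{F}_i$; the candidate point is $x=(x_i)_{i\in I}$, and one must argue $x\in\bigcap_{F\in\mathcal{F}}cl_X F=\bigcap\mathcal{F}$ using a basic-open-neighborhood argument together with the finite intersection property --- this coordinatewise-to-global step is the main obstacle in the whole theorem, exactly as in the realcompact case, and requires care that the finitely many coordinates constraining a basic neighborhood interact correctly with the finite intersection property.

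For (5), write $X=E\cup F$ with $E$ compact and $F$ a $\mathbb{Z}$-embedded $c_c$-realcompact subspace; let $\mathcal{G}$ be a $c$-stable family of closed sets in $X$ with the finite intersection property. If some member of $\mathcal{G}$ (or a finite intersection of members) misses $E$, intersect everything with $\overline{F}$ and reduce to $F$ using $\mathbb{Z}$-embedding to transport $c$-stability; otherwise every finite subfamily meets the compact set $E$, so $\{G\cap E:G\in\mathcal{G}\}$ has the finite intersection property in $E$ and hence nonempty intersection, giving $\bigcap\mathcal{G}\supseteq\bigcap\{G\cap E\}\neq\emptyset$. The $\mathbb{Z}$-embedding hypothesis is precisely what lets a $c$-stable family in $F$ be recognized via functions pulled back from $C(X,\mathbb{Z})$. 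Finally, for (6), let $F$ be a $\mathbb{Z}$-embedded $c_c$-realcompact subspace of the Hausdorff space $X$ and suppose $p\in cl_X F\setminus F$; consider the trace on $F$ of the closed neighborhoods of $p$ in $X$, obtaining a family $\mathcal{F}$ of closed sets of $F$ with the finite intersection property and empty intersection in $F$ (empty because $p\notin F$ and $X$ is Hausdorff). Show $\mathcal{F}$ is $c$-stable in $F$: given $f\in C(F,\mathbb{Z})$, extend to $\bar f\in C(X,\mathbb{Z})$ by $\mathbb{Z}$-embedding; since $\mathbb{Z}^{\ast}=\mathbb{Z}\cup\{\omega\}$ considerations or direct continuity at $p$ give that $\bar f$ is bounded on some closed neighborhood of $p$, hence $f$ is bounded on the corresponding member of $\mathcal{F}$. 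Then $c_c$-realcompactness of $F$ forces $\bigcap\mathcal{F}\neq\emptyset$, a contradiction; thus $cl_X F=F$.
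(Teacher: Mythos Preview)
Your sketch follows exactly the route the paper itself takes: the authors give no proof of Theorem~\ref{2.4} at all, merely asserting that ``by adapting the arguments of Theorem~5.2, Theorem~5.3 and Theorem~5.4 in \cite{ref9}'' the six statements follow. Your outlines for (1), (2), (3), (5), (6) are correct and more detailed than anything in the paper.

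There is, however, a genuine gap in your treatment of (4). You correctly flag the coordinatewise-to-global step as ``the main obstacle'' but you do not supply the missing idea, and the argument as written does not close. Knowing only that $x_i\in cl_{X_i}\pi_i(F)$ for each $i$ and each $F\in\mathcal{F}$ tells you that every basic neighbourhood $U=\prod_{i\in J}U_i\times\prod_{i\notin J}X_i$ of $x$ satisfies $U_i\cap\pi_i(F)\neq\emptyset$ for each $i\in J$ \emph{separately}; it does not produce a single point of $F$ whose $i$-th coordinate lies in $U_i$ for all $i\in J$ simultaneously, so $U\cap F\neq\emptyset$ does not follow. What is needed (and what Mandelkar does, and what the paper itself invokes in the very next theorem) is to first enlarge $\mathcal{F}$, via Zorn's lemma, to a \emph{maximal} $c$-stable family of closed sets with the finite intersection property. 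Such a maximal family automatically enjoys the prime condition $A\cup B\in\mathcal{F}\Rightarrow A\in\mathcal{F}$ or $B\in\mathcal{F}$ (because any closed set meeting every member of $\mathcal{F}$ can be adjoined without destroying either $c$-stability or the finite intersection property, hence already belongs to $\mathcal{F}$). With primeness in hand, if $x\notin F$ and $U$ is a basic neighbourhood of $x$ missing $F$, then $X\setminus U=\bigcup_{i\in J}\pi_i^{-1}(X_i\setminus U_i)\in\mathcal{F}$, so some $\pi_i^{-1}(X_i\setminus U_i)\in\mathcal{F}$, forcing $x_i\in cl_{X_i}\pi_i\bigl(\pi_i^{-1}(X_i\setminus U_i)\bigr)\subseteq X_i\setminus U_i$, contradicting $x_i\in U_i$. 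Without this enlargement-to-prime step your outline for (4) is incomplete.
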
	
We now show that any topological $X$ can be extended to a $c_c$-realcompact space containing the original space $X$ as a $C$-embedded dense subspace and enjoying a desirable extension property. The proof can be accomplished by closely following the arguments adopted to prove  Theorem 6.1 in \cite{ref9}. Nevertheless we give a brief outline of the main points of proof in our theorem.
\begin{theorem}\label{2.5}
	Every topological space $X$ can be extended to a $c_c$-realcompact space $\upsilon^c X$ as a dense subspace with the following extension property: each continuous map from $X$ into a regular $c_c$-realcompact space $Y$ can be extended to a continuous map from $\upsilon^cX$ into $Y$. $X$ is $c_c$-realcompact if and only if $X=\upsilon^cX$
\end{theorem}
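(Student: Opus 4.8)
The plan is to construct $\upsilon^c X$ by mimicking the Hewitt realcompactification, but using $C(X,\mathbb{Z})$ (or rather the structure it induces through a Banaschewski-type compactification) in place of $C(X)$. Concretely, I would first embed $X$ into the product $P = \prod_{f \in C(X,\mathbb{Z})} \mathbb{Z}^*_f$ of copies of the one-point compactification $\mathbb{Z}^* = \mathbb{Z} \cup \{\omega\}$, via the evaluation map $e(x) = (f(x))_{f}$; since $C(X,\mathbb{Z})$ separates points and closed sets in the relevant sense (when $X$ is zero-dimensional this is literal, and in general $e$ is at least continuous with dense image in its closure), $\overline{e(X)}$ is a compactification playing the role of $\beta_0 X$. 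Then I would define $\upsilon^c X$ to be the subspace of $\overline{e(X)}$ consisting of those points $p$ such that every $f^* \colon \overline{e(X)} \to \mathbb{Z}^*$ satisfies $f^*(p) \neq \omega$ — equivalently, the points whose associated maximal-ideal-like filter is ``real''. This is the direct analogue of $\upsilon X = \{p \in \beta X : f^*(p) \neq \infty \ \forall f \in C(X)\}$, and Theorem \ref{2.2} tells us this is exactly the condition separating real from hyperreal maximal ideals in the zero-dimensional case.

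Next I would verify the four required properties in turn. Density of $X$ in $\upsilon^c X$ is immediate since $X \subseteq \upsilon^c X \subseteq \overline{e(X)}$ and $X$ is dense in $\overline{e(X)}$ by construction. For $C$-embeddedness, given $g \in C(X,\mathbb{Z})$ one extends it to $g^* \colon \overline{e(X)} \to \mathbb{Z}^*$ (using the $C$-extension property recalled in the excerpt, after reducing to the zero-dimensional situation or invoking the coordinate projection directly), and the point of restricting to $\upsilon^c X$ is precisely that $g^*$ avoids $\omega$ there, hence restricts to a $\mathbb{Z}$-valued continuous extension. To see $\upsilon^c X$ is $c_c$-realcompact, I would use Theorem \ref{2.3}-style reasoning: take a $c$-stable family $\mathcal{F}$ of closed sets in $\upsilon^c X$ with the finite intersection property, pass to closures in $\overline{e(X)}$ to get a common point $p$ by compactness, and argue that $c$-stability of $\mathcal{F}$ forces $f^*(p) \neq \omega$ for every $f \in C(X,\mathbb{Z})$ — so $p \in \upsilon^c X$ — and then a standard argument shows $p$ lies in $\bigcap \mathcal{F}$ itself (each $F \in \mathcal{F}$ being closed in $\upsilon^c X$).

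For the universal/extension property, let $\phi \colon X \to Y$ be continuous with $Y$ regular and $c_c$-realcompact. The strategy is: first extend $\phi$ to a continuous map $\overline{\phi} \colon \overline{e(X)} \to \kappa Y$ into a suitable compactification of $Y$ (the Banaschewski-type compactification of $Y$, or embed $Y$ into a product of $\mathbb{Z}^*$'s the same way), using density of $X$ and compactness of the codomain; then show $\overline{\phi}(\upsilon^c X) \subseteq Y$. The containment $\upsilon^c X \to Y$ holds because if $p \in \upsilon^c X$ but $\overline{\phi}(p) \in \kappa Y \setminus Y$, then since $Y$ is $c_c$-realcompact there is some $h \in C(Y,\mathbb{Z})$ with $h^*(\overline{\phi}(p)) = \omega$, and then $h \circ \phi \in C(X,\mathbb{Z})$ has $(h\circ\phi)^*(p) = \omega$, contradicting $p \in \upsilon^c X$. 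Finally, the biconditional $X = \upsilon^c X \iff X$ is $c_c$-realcompact: the forward direction is trivial since $\upsilon^c X$ is always $c_c$-realcompact; for the reverse, if $X$ is $c_c$-realcompact and $p \in \overline{e(X)} \setminus X$, the family $\{F \cap X : F \text{ a closed nbhd basis element at } p\}$ (or the trace of the ideal-filter at $p$) is a $c$-stable family of closed sets in $X$ with the finite intersection property and empty intersection, contradiction — so $\upsilon^c X \subseteq X$.

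The main obstacle I expect is the first, foundational step: building the right compactification $\overline{e(X)}$ and the extension operators $f \mapsto f^*$ for a space $X$ that is \emph{not} assumed completely regular or even Hausdorff. The cited results (the identification with $\beta_0 X$, the $C$-extension property) are stated in the excerpt only for zero-dimensional spaces, so I would need either to work with the canonical quotient/reflection of $X$ that \emph{is} zero-dimensional (the map $e$ factors through it) and transport everything back, or to redo the Banaschewski construction from scratch in this generality — verifying that $C(X,\mathbb{Z})$-valued functions still extend and that the ``real points'' form a space with the claimed properties. Keeping careful track of which maps are genuine embeddings versus merely continuous-with-dense-image, and ensuring that closed sets in $\upsilon^c X$ behave well under closure in $\overline{e(X)}$, is where the real work lies; once that scaffolding is in place, the four properties follow the well-worn Hewitt-realcompactification template, exactly as in Theorem 6.1 of \cite{ref9}.
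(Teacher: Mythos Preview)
Your approach is genuinely different from the paper's, and the obstacle you flag at the end is fatal, not merely technical. The paper does not use any compactification or embedding: following Mandelkar's construction in \cite{ref9}, it builds $\upsilon^c X$ intrinsically by adjoining to $X$ one new point for each maximal $c$-stable family $\mathcal{G}^p$ of closed sets in $X$ with the finite intersection property and empty intersection, and topologizes the result by declaring $\bar{F}=\{p\in\upsilon^cX:F\in\mathcal{G}^p\}$ a basic closed set for each closed $F\subseteq X$. The extension of a map $t:X\to Y$ is then defined filter-theoretically: push $\mathcal{G}^p$ forward to $\mathcal{H}^p=\{G\ \text{closed in}\ Y:t^{-1}(G)\in\mathcal{G}^p\}$, observe this is a $c$-stable family with finite intersection property in $Y$, and use $c_c$-realcompactness of $Y$ to select $t^0(p)\in\bigcap\mathcal{H}^p$. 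No function in $C(X,\mathbb{Z})$ is ever evaluated, and no separation hypothesis on $X$ is invoked.

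Your embedding route cannot be repaired in the stated generality. The theorem applies to an \emph{arbitrary} topological space, and for a connected $X$ with more than one point every $f\in C(X,\mathbb{Z})$ is constant; your evaluation map $e$ then collapses $X$ to a single point, so $\overline{e(X)}$ is not an extension of $X$ at all. The zero-dimensional reflection you propose suffers the same collapse. Note that in this connected regime every family of subsets is vacuously $c$-stable, so $c_c$-realcompactness coincides with ordinary compactness and $\upsilon^c X$ must be a genuine compactification of $X$ --- something the Wallman-type closed-set construction delivers but a function-theoretic embedding into a product of copies of $\mathbb{Z}^*$ cannot.
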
 
\begin{proof}
	For each $x\in X$ let $\mathcal{ G}^x$ be the aggregate of all closed sets in $X$ which contain the point $x$. Then $\mathcal{ G}^x$ is a $c$-stable family of closed sets in $X$ with finite intersection property and with the prime condition: $A\cup B\in \mathcal{ G}^x\implies A\in \mathcal{ G}^x $ or $ B\in \mathcal{ G}^x$, $A,B\subseteq X$. We extend the set $X$ to a bigger set $\upsilon^c X$, so that $\upsilon^c X\setminus X$ becomes an index set for the collection of all maximal $c$-stable families of closed subsets of $X$ with finite intersection property but with empty intersection. For each $p\in \upsilon^c X\setminus X$, let $\mathcal{ G}^p$ designate the corresponding maximal $c$-stable family of closed sets in $X$ with finite intersection property and with empty intersection. For each closed set $F$ in $X$, we write $\bar{F}=\{p\in \upsilon^cX:F\in \mathcal{ G}^p\}$. Then $\{\bar{F}$: $F$is closed in $X$\} forms a base for closed sets of some topology on  $\upsilon^c X $ and in this topology for any closed set $F$ in $X$ $\bar{F}=cl_{\upsilon^cX} F$. Since $X$ belongs to each $\mathcal{ G}^p$, it is clear that $X$ is dense in $\upsilon^c X $. Let $t:X\rightarrow Y$ be a continuous map with $Y$, a regular $c_c$-realcompact space. Choose $p\in \upsilon^c X $. Let $\mathcal{H}^p=\{G\subseteq Y: G$ is closed in Y and $t^{-1}(G)\in\mathcal{G}^p\}$. Then $\mathcal{H}^p$ is a $c$-stable family of closed sets in $Y$ with finite intersection property. We select a point $y\in \bigcap\mathcal{H}^p$ and we set $t^0(p)=y$ with the aggrement that $t^0(p)=t(p)$ in case $p\in X$. Thus $t^0:\upsilon^c X\rightarrow Y$ is a well defined map which is further continuous. The remaining parts of the theorem can be proved by making arguments closely as in the proof of Theorem 6.1 of \cite{ref9}.
\end{proof}

	\section{$c_\mathcal{ P}$-compact spaces}
	In this section all the topological spaces $X$ that will appear will be assumed to be zero-dimensional. We define for any $\mathcal{ P}\in \Omega(X)$, $\upsilon_0^{\mathcal{P}}(X)=\{p\in \beta_0X:f^*(p)\neq\omega$ for each $f\in C_\mathcal{ P}(X)\cap C(X,\mathbb{Z})\}$. It is clear that if $\mathcal{ P}$=$\mathcal{E}$$\equiv$ the ideal of all closed sets in $X$ then $\upsilon_0^{\mathcal{E}}(X)=\upsilon_0X\equiv\{p\in \beta_0X:f^*(p)\neq\omega$ for each $f\in C(X,\mathbb{Z})\}$ the set defined in the begining of the proof of Theorem 3.8 in \cite{ref8}. The next theorem puts Theorem \ref{2.3} in a more general setting.
	\begin{theorem}\label{t-3.1}
For a $\mathcal{P}\in \Omega$, $X$ is $c_\mathcal{P}$-compact if and only if $X=\upsilon_0^{\mathcal{ P}}(X)$	
\end{theorem}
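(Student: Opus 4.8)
The plan is to follow the proof of Theorem \ref{2.3}, which is the special case $\mathcal{P}=\mathcal{E}$, replacing $C(X,\mathbb{Z})$ everywhere by its subset $C_\mathcal{P}(X)\cap C(X,\mathbb{Z})$. First I would dispose of the easy inclusion $X\subseteq\upsilon_0^{\mathcal{P}}(X)$: for $x\in X$ and any $f\in C(X,\mathbb{Z})$ one has $f^*(x)=f(x)\in\mathbb{Z}$, so $f^*(x)\neq\omega$. Hence the work lies in the reverse inclusion, and the theorem reduces to the assertion that $X$ is $c_\mathcal{P}$-compact if and only if every $p\in\beta_0X\setminus X$ carries some $f\in C_\mathcal{P}(X)\cap C(X,\mathbb{Z})$ with $f^*(p)=\omega$.

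Assume first that $X=\upsilon_0^{\mathcal{P}}(X)$, and let $\mathcal{F}$ be a $c_\mathcal{P}$-stable family of closed subsets of $X$ with the finite intersection property; suppose toward a contradiction that $\bigcap\mathcal{F}=\emptyset$. Compactness of $\beta_0X$ yields a point $p\in\bigcap_{F\in\mathcal{F}}cl_{\beta_0X}F$, and $p\notin X$, for otherwise $p\in cl_{\beta_0X}F\cap X=cl_XF=F$ for every $F\in\mathcal{F}$, forcing $p\in\bigcap\mathcal{F}$. Since $p\notin\upsilon_0^{\mathcal{P}}(X)$, pick $f\in C_\mathcal{P}(X)\cap C(X,\mathbb{Z})$ with $f^*(p)=\omega$. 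If $f$ were bounded by some $N$ on some $F\in\mathcal{F}$, then $F$ would be contained in the clopen subset $(f^*)^{-1}(\{-N,\dots,N\})$ of $\beta_0X$, hence so would $cl_{\beta_0X}F\ni p$, contradicting $f^*(p)=\omega$. So $f$ is unbounded on every member of $\mathcal{F}$, contradicting $c_\mathcal{P}$-stability; therefore $\bigcap\mathcal{F}\neq\emptyset$ and $X$ is $c_\mathcal{P}$-compact.

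Conversely, assume $X$ is $c_\mathcal{P}$-compact and fix $p\in\upsilon_0^{\mathcal{P}}(X)$; the goal is $p\in X$. Set $\mathcal{F}_p=Z_c(M_c^p)=\{Z(g):g\in M_c^p\}$. Since $M_c^p$ is a proper ideal with $Z(g)\cap Z(h)=Z(g^2+h^2)$, the family $\mathcal{F}_p$ is closed under finite intersections and avoids $\emptyset$, so it has the finite intersection property. It is also $c_\mathcal{P}$-stable: given $h\in C_\mathcal{P}(X)\cap C(X,\mathbb{Z})\subseteq C_c(X)$, membership of $p$ in $\upsilon_0^{\mathcal{P}}(X)$ gives $h^*(p)\neq\omega$, so by Theorem \ref{2.2} the element $|M_c^p(h)|$ is not infinitely large in $C_c(X)/M_c^p$, whence by Theorem \ref{t-2.1} $h$ is bounded on $Z(g)$ for some $g\in M_c^p$, i.e. on a member of $\mathcal{F}_p$. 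By $c_\mathcal{P}$-compactness $\bigcap\mathcal{F}_p\neq\emptyset$; a point of this intersection lies in $Z(g)$ for every $g\in M_c^p$, so $M_c^p$ is a fixed maximal ideal, which by the description of the maximal ideals of $C_c(X)$ recalled before Theorem \ref{2.2} means $p\in X$.

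Nothing here goes beyond the ideas already present in Theorem \ref{2.3}; the one place deserving care is that Theorem \ref{2.2} is used in its pointwise form, namely that for a single $f\in C(X,\mathbb{Z})$ one has $f^*(p)\neq\omega$ exactly when $|M_c^p(f)|$ is not infinitely large, an equivalence which does not presuppose $M_c^p$ real. If one prefers not to lean on that reading, the relevant step can be carried out directly: $f^*(p)=n\in\mathbb{Z}$ forces $p\in(f^*)^{-1}(n)=cl_{\beta_0X}Z(f-n)$ (the preimage is clopen and $X$ is dense in $\beta_0X$), so $Z(f-n)\in\mathcal{F}_p$ and $f$ is bounded on it by $|n|$, which is all the argument needs.
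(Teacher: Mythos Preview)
Your proof is correct and follows exactly the approach the paper intends: the paper omits the proof of Theorem~\ref{t-3.1} and says it ``can be done by making some appropriate modification in the arguments adopted in the proof of Theorem~\ref{2.3},'' which is precisely what you carry out by replacing $C(X,\mathbb{Z})$ with $C_\mathcal{P}(X)\cap C(X,\mathbb{Z})$ throughout. Your care in isolating the pointwise reading of Theorem~\ref{2.2} and supplying the direct argument via $(f^*)^{-1}(n)=cl_{\beta_0X}Z(f-n)$ is well placed and makes the write-up self-contained.
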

	We omit the proof of this theorem because it can be done by making some appropriate modification in the arguments adopted in the proof of Theorem \ref{2.3}.
	\\
	It is clear that if $\mathcal{ P}$, $\mathcal{Q}\in \Omega(X)$ with $\mathcal{ P}\subset\mathcal{Q}$, then any $c_\mathcal{Q}$-stable family of closed sets in $X$ is also $c_\mathcal{P}$-stable, consequently if $X$ is $c_\mathcal{P}$-compact then $X$ is $c_\mathcal{Q}$-compact also. In particular every $c_\mathcal{ P}$-compact space is $c_c$-realcompact and hence $c$-realcompact in view of Theorem \ref{2.3}. The following question therefore seems to be natural.
	\begin{Question}
		If $X$ is a zero-dimensional non-compat $c$-realcompact space, then does there exist a minimal ideal $\mathcal{ P}$ of closed sets in $X$ (minimal in some sense of the term) for which $X$ becomes $c_\mathcal{ P}$-compact ?
	\end{Question}
		
		No possible answer to this question is known to us, however the following proposition shows that the answer to this question is in the negative if the phrase `minimal' is interpreted in the set inclusion sense of the term.
		
		\begin{theorem}\label{t-3.3}
			Let $X$ be a non compact zero-dimensional space. Suppose $\mathcal{P} \in \Omega(X)$ is such that $X$ is $c_\mathcal{P}$-compact. Then there exists $\mathcal{R}\in \Omega(X)$ such that $\mathcal{R}  \varsubsetneqq\mathcal{P}$ and $X$ is $c_\mathcal{R}$-compact.
		\end{theorem}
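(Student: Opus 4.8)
The plan is to reduce everything to Theorem \ref{t-3.1}, which identifies $c_\mathcal{P}$-compactness of $X$ with the equality $X=\upsilon_0^{\mathcal P}(X)$, and then to manufacture $\mathcal{R}$ by deleting a single carefully chosen point $x_0\in X$ from the members of $\mathcal{P}$.

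First I would use non-compactness of $X$ together with $c_\mathcal{P}$-compactness: Theorem \ref{t-3.1} gives $X=\upsilon_0^{\mathcal P}(X)\subsetneq\beta_0X$, so fixing any $q_0\in\beta_0X\setminus X$ there is $f_0\in C_\mathcal{P}(X)\cap C(X,\mathbb{Z})$ with $f_0^*(q_0)=\omega$. In particular $f_0\neq 0$, so $X\setminus Z(f_0)\neq\emptyset$; choose $x_0\in X\setminus Z(f_0)$ and put
\[
\mathcal{R}=\{F\in\mathcal{P}: x_0\notin F\}.
\]
That $\mathcal{R}\in\Omega(X)$ is immediate from the corresponding closure properties of $\mathcal{P}$ (a closed subset of a set missing $x_0$ misses $x_0$; a finite union of sets missing $x_0$ misses $x_0$), and $\mathcal{R}\varsubsetneqq\mathcal{P}$ because $cl_X(X\setminus Z(f_0))$ lies in $\mathcal{P}$ but contains $x_0$, hence is not in $\mathcal{R}$.

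It then remains to show $X$ is $c_\mathcal{R}$-compact, and by Theorem \ref{t-3.1} this amounts to $\upsilon_0^{\mathcal R}(X)=X$. Since $\mathcal{R}\subseteq\mathcal{P}$ gives $C_\mathcal{R}(X)\subseteq C_\mathcal{P}(X)$, we automatically have $X=\upsilon_0^{\mathcal P}(X)\subseteq\upsilon_0^{\mathcal R}(X)$, so the content is the reverse inclusion. Take $q\in\beta_0X\setminus X$; as before pick $f\in C_\mathcal{P}(X)\cap C(X,\mathbb{Z})$ with $f^*(q)=\omega$. Since $\beta_0X$ is compact, Hausdorff and zero-dimensional and $q\neq x_0$, there is a clopen set $V$ of $\beta_0X$ with $q\in V$ and $x_0\notin V$; set $U=V\cap X$, a clopen subset of $X$ not containing $x_0$, and let $h=f\cdot\chi_U\in C(X,\mathbb{Z})$. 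Because $U$ is dense in $V$ and $h=f$ on $U$, the extensions $h^*$ and $f^*$ coincide on $V$, so $h^*(q)=f^*(q)=\omega$; on the other hand $X\setminus Z(h)\subseteq U\cap(X\setminus Z(f))$, so $cl_X(X\setminus Z(h))$ is a closed subset of $cl_X(X\setminus Z(f))\in\mathcal{P}$, hence belongs to $\mathcal{P}$, and it is contained in the clopen set $U$, hence avoids $x_0$. Thus $cl_X(X\setminus Z(h))\in\mathcal{R}$, i.e. $h\in C_\mathcal{R}(X)\cap C(X,\mathbb{Z})$, and $h$ witnesses $q\notin\upsilon_0^{\mathcal R}(X)$. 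Hence $\upsilon_0^{\mathcal R}(X)\subseteq X$, and we are done.

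The only genuinely delicate step is the construction of $h$: one must retain the value $\omega$ at $q$ while dragging the cozero-closure into the strictly smaller ideal, and the device that achieves both at once is multiplication by the characteristic function of a clopen neighbourhood of $q$ in $X$ that misses $x_0$. Producing such a neighbourhood uses zero-dimensionality of $\beta_0X$, and one has to be slightly careful that $U$ really is clopen in $X$ and dense in the clopen set $V$, so that the identity $h^*|_V=f^*|_V$ is justified (two continuous maps into the Hausdorff space $\mathbb{Z}^*$ agreeing on a dense subset coincide). The rest, namely that $\mathcal{R}$ is an ideal of closed sets strictly inside $\mathcal{P}$, is purely formal. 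If one prefers not to invoke Theorem \ref{t-3.1}, the same $h$ works in a direct argument modelled on the proof of Theorem \ref{2.3}: a $c_\mathcal{R}$-stable family of closed sets with the finite intersection property but empty intersection would accumulate in $\beta_0X$ at some $q\notin X$, and $h$ would then be a member of $C_\mathcal{R}(X)\cap C(X,\mathbb{Z})$ unbounded on every set of the family, contradicting $c_\mathcal{R}$-stability.
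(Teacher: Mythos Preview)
Your proof is correct and follows essentially the same route as the paper: both arguments define $\mathcal{R}=\{F\in\mathcal{P}:x_0\notin F\}$ for a suitably chosen $x_0$, and then, for each $q\in\beta_0X\setminus X$, multiply a witnessing $f\in C_\mathcal{P}(X)\cap C(X,\mathbb{Z})$ by the characteristic function of a clopen set in $\beta_0X$ separating $q$ from $x_0$, using density to transfer the value $\omega$ at $q$. The only cosmetic difference is that the paper writes this characteristic function as $\psi=h|_X$ with $h\in C(\beta_0X,\{0,1\})$ and argues $l^*(q)=g^*(q)h(q)$, whereas you phrase the same computation via agreement of $h^*$ and $f^*$ on the dense subset $U$ of $V$.
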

\begin{proof}
We get from Theorem \ref{t-3.1} that $X=\upsilon_0^\mathcal{P}X$. As $X$ is non compact we can choose a point $p\in \beta_0X\setminus X$. Then $p\notin \upsilon_0^\mathcal{P}X$. Accordingly there exists $f\in C_\mathcal{P}(X)\cap C(X,\mathbb{Z})$ such that $f^*(p)=\omega$. We select a point $x\in X$ such that $f(x)\neq 0$. Set $\mathcal{R}=\{D\in \mathcal{P}:x\notin D\}$. It is easy to check that $\mathcal{R}$ is an ideal of closed sets in $X$ i.e., $\mathcal{R}\in \Omega (X)$. Furthermore, $cl_X(X-Z(f))$ is a member of $\mathcal{P}$ containing the point $x$. This implies that $cl_X(X-Z(f))\notin \mathcal{R}$. Thus $\mathcal{R}\varsubsetneq\mathcal{P}$. To show that $X$ is $c_\mathcal{R}$-compact. We shall show that $X=\upsilon_0^\mathcal{R}X $ (see Theorem \ref{t-3.1}). So choose a point $q\in \beta_0X\setminus X$ then $q\notin \upsilon_0^\mathcal{P}X$, consequently there exists $g\in C_\mathcal{P}(X)\cap C(X,\mathbb{Z})$ such that $g^*(q)=\omega$. For the distinct points $q, x$ in $\beta_0X$ there exist disjoint open sets $U$, $V$ in this space such that $x\in U$, $q\in V$. Since $\beta_0X$ is zero-dimensional there exists therefore a clopen set $W$ in $\beta_0X$ such that $q\in W\subset V$. The map $h:\beta_0X\rightarrow \{0,1\}$ given by $h(W)=\{1\}$ and $h(\beta_0X\setminus W)=\{0\}$ is continuous. We note that $h(U)=\{0\}$ and $h(q)=1$. Let $\psi=h|_X$. Then $\psi \in C(X,\mathbb{Z})$. Take $l=g.\psi$. Since $g\in C_\mathcal{P}(X)$ and $C_\mathcal{P}(X)$ is an ideal of $C(X)$, it follows that $l\in C_\mathcal{P}(X)$. Furthermore the fact that $g$ and $\psi$ are both functions in $C(X,\mathbb{Z})$ implies that $l\in (X,\mathbb{Z})$. Also the function $h\in C(\beta_0X, \mathbb{Z})$ is the unique continuous extension of $\psi \in C(X, \mathbb{Z})$, hence we can write $h=\psi^*$. This implies that $l^*(q)=g^*(q)\psi^*(q)=g^*(q)h(q)=\omega$, because $g^*(q)=\omega$ and $h(q)\neq0$
\\
On the other hand if $y\in U\cap X$ then $h(y)=0$ and hence $l(y)=0$. Since $U\cap X$ is an open neighbourhood of $x$ in the space $X$, this implies that $x\notin cl_X(X\setminus Z(l))$. Since $cl_X(X\setminus Z(l))\in \mathcal{ P}$, already verified, it follows that $cl_X(X\setminus Z(l))\in \mathcal{R}$. Thus $l\in C_\mathcal{R}(X)\cap C(X,\mathbb{Z})$. Since $l^*(q)=\omega$, this further implies that $q\notin \upsilon_0^\mathcal{R}(X)$.
\end{proof}

It is trivial that a (zero-dimensional) compact space is $c$-realcompact. It is also observed that a Lindel{\"o}f space is $c$-realcompact (Corollary 3.6, \cite{ref8}). But for an infinite cardinal number $\theta$, a finally $\theta$-compact space may not be $c$-realcompact. Indeed the space $[0,\omega_1)$ of all countable ordinals is a celebrated example of a zero-dimensional space which is not realcompact (see 8.1, \cite{ref6}). Since a zero-dimensional $c$-realcompact space is necessarily realcompact (vide proposition 5.8, \cite{ref8}) it follows therefore that [$0,\omega_1$) is not a $c$-realcompact space. But it is easy to show that [$0,\omega_1$) is finally $\omega_2$-compact. For the same reason, the Tychonoff plank $T\equiv$[$0,\omega_1$)$\times$[$0,\omega_0$)-$\{(\omega_1,\omega_0)\}$ of 8.20 in \cite{ref6}, is finally $\omega_2$-compact without being $c$-realcompact. It can be easily shown that a closed subset of a finally $\theta$-compact space is finally $\theta$-compact.
\\

Furthermore, the following characterization of finally $\theta$-compactness of a topological space can be established by routine arguments.
\begin{theorem}\label{t-3.4}
The following two statements are equivalent for an infinite cardinal number $\theta$.
\begin{enumerate}
\item $X$ is finally $\theta$-compact.
\item If $\mathcal{B}$ is a family of closed sets in $X$, such that for any subfamily $\mathcal{B}_0$ of $\mathcal{B}$ with $|\mathcal{B}_0|<\theta$, $\bigcap \mathcal{B}_0\neq\emptyset$, then $\bigcap \mathcal{B}\neq\emptyset$.
\end{enumerate}
\end{theorem}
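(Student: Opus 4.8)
The plan is to prove both implications by the standard dualization between open covers and families of closed sets, using De Morgan's laws, exactly mirroring the classical proof that compactness is equivalent to the finite intersection property. No topological input beyond "closed = complement of open" is needed, so the argument is purely set-theoretic once the cardinality bookkeeping is set up.

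First I would prove $(1)\Rightarrow(2)$. Assume $X$ is finally $\theta$-compact and let $\mathcal{B}$ be a family of closed sets in which every subfamily of cardinality $<\theta$ has nonempty intersection. Suppose, towards a contradiction, that $\bigcap\mathcal{B}=\emptyset$. Then $\{X\setminus B : B\in\mathcal{B}\}$ is an open cover of $X$, since $\bigcup_{B\in\mathcal{B}}(X\setminus B)=X\setminus\bigcap\mathcal{B}=X$. By finally $\theta$-compactness there is a subcover indexed by some $\mathcal{B}_0\subseteq\mathcal{B}$ with $|\mathcal{B}_0|<\theta$, and then $\bigcap\mathcal{B}_0 = X\setminus\bigcup_{B\in\mathcal{B}_0}(X\setminus B)=\emptyset$, contradicting the hypothesis on $\mathcal{B}$. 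Hence $\bigcap\mathcal{B}\neq\emptyset$.

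Next I would prove $(2)\Rightarrow(1)$. Let $\mathcal{U}$ be an open cover of $X$ and suppose, towards a contradiction, that no subcover has cardinality $<\theta$. Put $\mathcal{B}=\{X\setminus U : U\in\mathcal{U}\}$, a family of closed sets. Given any subfamily $\mathcal{B}_0\subseteq\mathcal{B}$ with $|\mathcal{B}_0|<\theta$, the corresponding $\mathcal{U}_0\subseteq\mathcal{U}$ has $|\mathcal{U}_0|<\theta$ and therefore fails to cover $X$; pick $x\in X\setminus\bigcup\mathcal{U}_0 = \bigcap\mathcal{B}_0$, so $\bigcap\mathcal{B}_0\neq\emptyset$. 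By hypothesis $(2)$ we then get $\bigcap\mathcal{B}\neq\emptyset$, i.e. $X\setminus\bigcup\mathcal{U}\neq\emptyset$, contradicting that $\mathcal{U}$ covers $X$. Hence some subcover has cardinality $<\theta$, and $X$ is finally $\theta$-compact.

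I do not anticipate a genuine obstacle here; the only point requiring a little care is the direction $(2)\Rightarrow(1)$, where one must observe that the contrapositive assumption "no subcover of cardinality $<\theta$" is exactly what guarantees that \emph{every} $<\theta$-sized subfamily of $\mathcal{U}$ is a non-cover, which is precisely the hypothesis needed to feed into $(2)$. Everything else is bookkeeping with complements, and the equivalence then falls out immediately.
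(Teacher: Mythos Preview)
Your proof is correct and is precisely the ``routine argument'' the paper alludes to; the paper itself does not spell out a proof of this theorem, merely remarking that it ``can be established by routine arguments,'' and your De~Morgan dualization is exactly that routine.
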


\begin{theorem}\label{t-3.5}
Let $X$ be $c$-realcompact and $\mathcal{P}_\mathcal{\theta}$ the ideal of all closed finally $\theta$-compact subsets of $X$. Then $X$ is finally $\theta$-compact if and only if it is $c_{\mathcal{P}_\theta}$-compact.
\end{theorem}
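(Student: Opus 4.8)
The plan is to prove both implications by working through the Banaschewski compactification and the criterion $X = \upsilon_0^{\mathcal{P}_\theta}(X)$ supplied by Theorem~\ref{t-3.1}, together with the covering/intersection characterization of finally $\theta$-compactness in Theorem~\ref{t-3.4}.

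For the forward direction, assume $X$ is $c$-realcompact and finally $\theta$-compact. By Theorem~\ref{t-3.1} it suffices to show $X = \upsilon_0^{\mathcal{P}_\theta}(X)$; since $X$ is $c$-realcompact we already know $X = \upsilon_0 X = \upsilon_0^{\mathcal{E}}(X)$, and the inclusion $\upsilon_0^{\mathcal{P}_\theta}(X) \subseteq \upsilon_0^{\mathcal{E}}(X) = X$ is automatic because $\mathcal{P}_\theta \subseteq \mathcal{E}$. Conversely, given $p \in X$ I must check $p \in \upsilon_0^{\mathcal{P}_\theta}(X)$, i.e.\ that $f^*(p) \neq \omega$ for every $f \in C_{\mathcal{P}_\theta}(X) \cap C(X,\mathbb{Z})$; but for $p \in X$ we simply have $f^*(p) = f(p) \in \mathbb{Z}$, so this is trivial. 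Hence in fact $X$ is $c_{\mathcal{P}_\theta}$-compact as soon as $X$ is $c$-realcompact, and the finally-$\theta$-compact hypothesis is not even needed for this implication --- so I would state it as: if $X$ is $c$-realcompact then $X$ is $c_{\mathcal{P}_\theta}$-compact, hence certainly the forward implication holds. (I would double-check this against the paper's conventions; if the intended statement genuinely requires final $\theta$-compactness in this direction, the extra input would come from Theorem~\ref{t-3.4} applied to the family witnessing failure.)

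The substantive direction is the converse: assume $X$ is $c_{\mathcal{P}_\theta}$-compact and deduce final $\theta$-compactness. I would verify condition (2) of Theorem~\ref{t-3.4}: let $\mathcal{B}$ be a family of closed sets in $X$ such that every subfamily of cardinality $<\theta$ has nonempty intersection, and show $\bigcap\mathcal{B} \neq \emptyset$. The strategy is to show $\mathcal{B}$ is a $c_{\mathcal{P}_\theta}$-stable family of closed sets with the finite intersection property; then $c_{\mathcal{P}_\theta}$-compactness gives $\bigcap\mathcal{B}\neq\emptyset$ directly. The finite intersection property is immediate since $\theta$ is infinite. For $c_{\mathcal{P}_\theta}$-stability, take $f \in C(X,\mathbb{Z}) \cap C_{\mathcal{P}_\theta}(X)$; I must produce $B \in \mathcal{B}$ on which $f$ is bounded. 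Here I would use that $\mathrm{cl}_X(X\setminus Z(f)) \in \mathcal{P}_\theta$, i.e.\ the cozero-closure $K := \mathrm{cl}_X(X \setminus Z(f))$ is a closed finally $\theta$-compact subset of $X$; note $f$ is constantly $0$ off $K$, so boundedness of $f$ on a set reduces to boundedness of $f$ on its intersection with $K$. If some $B \in \mathcal{B}$ is disjoint from $K$, then $f$ vanishes on $B$ and we are done; otherwise, consider the family $\{\,f^{-1}([-n,n]) \cap K : n \in \mathbb{N}\,\}$ of closed subsets of $K$ whose union is $K$, equivalently the open cover $\{X \setminus f^{-1}([-n,n]) : n\}$ of... the complement --- more cleanly, I would argue by contradiction: if $f$ is unbounded on every $B \in \mathcal{B}$, then for each $n$ the closed sets $\{B \cap (X \setminus f^{-1}((-n,n))) : B \in \mathcal{B}\}$ are nonempty, and combined with the $<\theta$-intersection hypothesis on $\mathcal{B}$ and the final $\theta$-compactness of $K$ (via Theorem~\ref{t-3.4} applied inside $K$ to a suitable derived family), one forces a point of $K$ on which $f$ is $\geq n$ for all $n$, an impossibility.

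The main obstacle I anticipate is this last contradiction step: carefully extracting, from the $<\theta$-intersection property of $\mathcal{B}$ on $X$ plus the final $\theta$-compactness of the single closed set $K$, a point witnessing unboundedness --- the cardinality bookkeeping has to be arranged so that the relevant subfamily living inside $K$ has size $<\theta$ while still capturing ``$f$ unbounded on every $B$.'' I expect the clean route is: replace $\mathcal{B}$ by $\mathcal{B}' = \{\,\mathrm{cl}_X(B \cap (X \setminus f^{-1}((-n,n)))) : B \in \mathcal{B},\, n \in \mathbb{N}\,\}$, observe $\mathcal{B}'$ is a family of closed subsets of $K$, check that every $<\theta$-subfamily of $\mathcal{B}'$ has nonempty intersection (using that $\mathbb{N}$ is countable $\leq \theta$ and the hypothesis on $\mathcal{B}$), apply Theorem~\ref{t-3.4}(2)$\Rightarrow$ nothing --- rather apply final $\theta$-compactness of $K$ to get $\bigcap\mathcal{B}' \neq\emptyset$, and note any point there lies in $f^{-1}((-n,n))^c$ for all $n$, contradiction. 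Everything else --- the membership $\mathcal{R}$-type ideal checks, the reduction to Theorem~\ref{t-3.1}, the triviality of the forward direction --- is routine.
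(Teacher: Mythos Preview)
Your forward direction contains a genuine error: the monotonicity of $\upsilon_0^{\mathcal{P}}(X)$ in $\mathcal{P}$ goes the \emph{other} way. From $\mathcal{P}_\theta \subseteq \mathcal{E}$ one gets $C_{\mathcal{P}_\theta}(X) \subseteq C_{\mathcal{E}}(X)=C(X)$, so membership in $\upsilon_0^{\mathcal{P}_\theta}(X)$ imposes \emph{fewer} constraints than membership in $\upsilon_0^{\mathcal{E}}(X)=\upsilon_0 X$; hence $\upsilon_0 X \subseteq \upsilon_0^{\mathcal{P}_\theta}(X)$, not the reverse. Your conclusion that ``$c$-realcompact alone implies $c_{\mathcal{P}_\theta}$-compact'' is therefore unfounded, and in fact it is false: combined with the converse implication it would force every $c$-realcompact space to be finally $\theta$-compact for every $\theta$, hence compact. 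The paper's argument genuinely uses final $\theta$-compactness here: given $p\in\beta_0X\setminus X$, $c$-realcompactness supplies $f\in C(X,\mathbb{Z})$ with $f^*(p)=\omega$, and final $\theta$-compactness of $X$ guarantees that the closed set $cl_X(X\setminus Z(f))$ is finally $\theta$-compact, i.e.\ $f\in C_{\mathcal{P}_\theta}(X)$, whence $p\notin\upsilon_0^{\mathcal{P}_\theta}(X)$.

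Your converse plan also has a gap. You try to show $\mathcal{B}$ itself is $c_{\mathcal{P}_\theta}$-stable and, failing that, pass to $\mathcal{B}'=\{\,B\cap\{|f|\ge n\}:B\in\mathcal{B},\ n\in\mathbb{N}\,\}$ inside $K$. But verifying the $<\theta$-intersection property for $\mathcal{B}'$ requires that $f$ be unbounded on every $<\theta$-intersection $\bigcap_{i\in I}B_i$, whereas your hypothesis only gives unboundedness on each individual $B\in\mathcal{B}$; the step ``using that $\mathbb{N}$ is countable $\le\theta$ and the hypothesis on $\mathcal{B}$'' does not close this. The paper avoids the difficulty by first enlarging $\mathcal{B}$ to the family $\mathcal{D}$ of all $<\theta$-fold intersections of members of $\mathcal{B}$ (still with finite intersection property and empty total intersection), and then proving $\mathcal{D}$ is $c_{\mathcal{P}_\theta}$-stable directly: for $f\in C_{\mathcal{P}_\theta}(X)\cap C(X,\mathbb{Z})$ the finally $\theta$-compact set $cl_X(X\setminus Z(f))$ is covered by $<\theta$ many sets $X\setminus B_\alpha$, so the corresponding intersection $\bigcap_{\alpha\in\Lambda_0}B_\alpha\in\mathcal{D}$ is contained in $Z(f)$ and $f$ vanishes on it. No contradiction argument or unboundedness bookkeeping is needed.
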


\begin{proof}
Let $X$ be finally $\theta$-compact and $p\in \beta_0X \setminus X$. To show that $X$ is $c_{\mathcal{P}_\theta}$-compact it suffices to show in view of Theorem \ref{t-3.1} that $p\notin \upsilon_0^{\mathcal{P}_\theta}(X)$. Indeed $X$ is $c$-realcompact implies that the maximal ideal $M_c^p$ of $C_c(X)$ is not real. Consequently by Theorem \ref{2.2}, there exists $f\in C(X,\mathbb{Z})$ such that $f^*(p)=\omega$. Now $cl_X(X\setminus Z(f))$, like any closed subsets of $X$ is finally $\theta$-compact. Thus $f\in C_{\mathcal{P}_\theta} (X)\cap C(X,\mathbb{Z})$, hence $p\notin \upsilon _0^{\mathcal{P}_\theta}(X)$.
\\
To prove the converse, let $X$ be not finally $\theta$-compact. It follows from Theorem \ref{t-3.4} that there exists a family $\mathcal{B}=\{B_\alpha : \alpha \in \Lambda \}$ of closed sets in $X$ with the following properties: for any subfamily $\mathcal{B}_1$ of $\mathcal{B}$ with $|\mathcal{B}_1|< \theta$. $\bigcap \mathcal{B}_1\neq \emptyset$ but $\bigcap \mathcal{B}=\emptyset$. Let $\mathcal{D}=\{D_\alpha:\alpha\in \Lambda^* \}$ be the aggregate of all sets $D_\alpha's$, which are intersections of $<\theta$ many sets in the family $\mathcal{B}$. Then $\mathcal{B}\subseteq \mathcal{D}$ and hence $\bigcap \mathcal{D}=\emptyset$. Also $\mathcal{D}$ has finite intersection property. We shall show that $\mathcal{D}$ is a $c_{\mathcal{P}_\theta}$-stable family and hence $X$ is not $\mathcal{P}_\theta$-compact. Towards such a proof choose $f\in C_{\mathcal{P}_\theta}(X)\cap C(X,\mathbb{Z})$, then $cl_X(X\setminus Z(f)$ is a finally $\theta$-compact subset of $X$. Since $\{X\setminus B_\alpha:\alpha \in \Lambda\}$ is an open cover of $X$, there exists a subset $\Lambda_0$ of $\Lambda$ with $|\Lambda_0|<\theta$ such that $cl_X(X\setminus Z(f))\subseteq \bigcup\limits_{\alpha\in \Lambda_0} (X\setminus B_\alpha)$. This implies that $\bigcap\limits_{\alpha\in \Lambda_0} B_\alpha \subseteq Z(f)$ and we note that $\bigcap\limits_{\alpha\in \Lambda_0} B_\alpha \in \mathcal{D}$. Thus $f$ becomes bounded on a set lying in the family $\mathcal{D}$. Hence $\mathcal{D}$ becomes a $c_{\mathcal{P}_\theta}$-stable family.
\end{proof}

	\newpage

\end{document}